\numberwithin{equation}{section}
\newtheorem{thm}{Theorem}[section]
\newtheorem{prop}[thm]{Proposition}
\newtheorem{lem}[thm]{Lemma}
\newtheorem{cor}[thm]{Corollary}
\numberwithin{equation}{section}
\newcommand{\A}{\mathbb{A}}
\newcommand{\C}{\mathbb{C}}
\newcommand{\N}{\mathbb{N}}
\newcommand{\Q}{\mathbb{Q}}
\newcommand{\R}{\mathbb{R}}
\newcommand{\Z}{\mathbb{Z}}
\newcommand{\f}{\mathbf{f}}
\newcommand{\g}{\mathbf{g}}
\newcommand{\mcO}{\mathcal{O}}
\newcommand{\mfa}{\mathfrak{a}}
\newcommand{\mfb}{\mathfrak{b}}
\newcommand{\mfc}{\mathfrak{c}}
\newcommand{\mfd}{\mathfrak{d}}
\newcommand{\h}{\mathfrak{h}}
\newcommand{\mfm}{\mathfrak{m}}
\newcommand{\mfn}{\mathfrak{n}}
\newcommand{\Dif}{\mathfrak{D}}
\newcommand{\mfp}{\mathfrak{p}}
\newcommand{\mfq}{\mathfrak{q}}
\newcommand{\GL}{\mathrm{GL}}
\newcommand{\SO}{\mathrm{SO}}
\def\1{1\!\!1}
\newcommand{\pmat}[4]{ \begin{pmatrix} #1 & #2 \\ #3 & #4 \end{pmatrix}}
\def\dis{\displaystyle}
\title[Simultaneous behaviour of the Fourier coefficients of HMCF]{Simultaneous behaviour of the Fourier coefficients of two Hilbert modular cusp forms}
\author[S. Kaushik]{Surjeet Kaushik}
\address[S. Kaushik]{Department of Mathematics, Indian Institute of Technology Hyderabad, Kandi, Sangareddy 502285, INDIA.}
\email{amarsurjeetkaushik@gmail.com}
\author[N. Kumar]{Narasimha Kumar}
\address[N. Kumar]{Department of Mathematics, Indian Institute of Technology Hyderabad, Kandi, Sangareddy 502285, INDIA.}
\email{narasimha.kumar@iith.ac.in}
\keywords{Hilbert modular forms, Fourier coefficients, Sign changes, Non-vanishing}
\subjclass[2010]{Primary 11F03, 11F30; Secondary 11F41}
\date{\today}
\begin{document}
\begin{abstract}
	In this article, we study the simultaneous sign changes  of the Fourier coefficients of two Hilbert cusp forms of different integral weights. 
	We also study the simultaneous non-vanishing of Fourier coefficients,  of two distinct non-zero primitive Hilbert cuspidal non-CM eigenforms of integral weights,
	at powers of a fixed prime ideal.
\end{abstract}

\maketitle

\section{Introduction}
The sign changes of Fourier coefficients of modular forms over number fields has been an interesting area of research in the recent years. 
In this article,  we are interested in the study of the simultaneous sign changes and simultaneous non-vanishing of the Fourier coefficients of 
distinct Hilbert cusp forms.

In~\cite{KS09}, the authors worked on simultaneous sign changes of Fourier coefficients of two cusp forms of different weights with real algebraic Fourier coefficients. 
They proved that, if $f$ and $g$ are two normalized cusp forms of the same level and different weights with totally real algebraic Fourier coefficients, 
then there exist a Galois automorphism $\sigma$ such that $f^\sigma$ and $g^\sigma$ have infinitely many Fourier coefficients of the opposite signs.
Their proof uses Landau's theorem on Dirichlet series with non-negative coefficients, the properties 
of the Rankin-Selberg zeta function attached to cusp forms, and the bounded denominators argument.

In~\cite{GKR15}, the authors, by using an elementary observation about real zeros of Dirichlet series instead of bounded denominators argument, strengthen the results of~\cite{KS09},
by doing away with the Galois conjugacy condition and in fact, they extended the result to cusp forms with arbitrary real Fourier coefficients. 

In~\cite{GKP18}, the authors investigated simultaneous non-vanishing of the Fourier coefficients at prime powers of Fourier coefficients of two different Hecke eigenforms of integral weight
over $\Q$.
They proved that if $f$ and $g$ are two Hecke eigenforms of integral weights and $a_f(n)$ and $a_g(n)$ are Fourier coefficients of $f$ and $g$, respectively, 
then for all primes $p$, the set $\{m\in \N| a_f(p^m)a_g(p^m)\neq0 \}$ has positive density. 

This article is a modest attempt to extend some results of Gun, Kohnen and Rath~\cite{GKR15} and Gun, Kumar and Paul~\cite{GKP18} to the Hilbert modular forms case.
Firstly, we show that two Hilbert cuspidal forms of different integral weights have infinitely many Fourier coefficients of same sign (resp., of opposite sign).
Secondly, we show that the simultaneous non-vanishing of the Fourier coefficients, of two non-zero distinct primitive Hilbert cuspidal non-CM eigenforms, at the powers of a fixed prime ideal
has a positive density.

% Firstly, we prove that if $\f$ and $\g$ are two Hilbert cusp forms of different weight $k$ and $l$ with real Fourier coefficients $C(\mfm,\f)$ and $C(\mfm,\g)$ for all ideals $\mfm\in \mcO_F$, respectively. 
% Then there exist infinitely many ideals $\mfm \subseteq \mcO_F$ such that $C(\mfm,\f) C(\mfm,\g)>0$ and infinitely many ideals $\mfm \subseteq \mcO_F$ such that $C(\mfm,\f)C(\mfm,\g)<0$. 

\section{Preliminaries}	 
  We assume that $k=(k_1,\dots, k_n)\in \Z_{>0}^n$ throughout this section. For a non-archimedean place $\mfp$ of $F$, $F$ is a totally real field of degree $n$.
  Let $F_\mfp$ be a completion of $F$. Let $\mfa$ and $\mfb$ be integral ideals of $F$, and define a subgroup $K_\mfp(\mfa, \mfb)$ of $\GL_2(F_\mfp)$ as
  \[ K_\mfp(\mfa, \mfb)=\left\{\left(\begin{matrix} a & b \\ c & d \end{matrix}\right)\in \GL_2(F_\mfp)\, : \, \begin{matrix} a\in \mcO_\mfp, & b\in \mfa_\mfp^{-1}\Dif_\mfp^{-1}, & \\ c\in \mfb_\mfp\Dif_\mfp, & d\in\mcO_\mfp, & |ad-bc|_\mfp=1\end{matrix}\right\}\]
  where the subscript $\mfp$ means the $\mfp$-parts of given ideals. Furthermore, we put 
  \[ K_0(\mfa, \mfb)=\SO(2)^n\cdot\prod_{\mfp<\infty}K_\mfp(\mfa, \mfb) \quad \text{and} \quad W(\mfa, \mfb)=\GL_2^+(\R)^nK_0(\mfa, \mfb).\]
  In particular, if $\mfa=\mcO_F$, then we simply write $K_\mfp(\mfb):=K_\mfp(\mcO_F, \mfb)$, $W(\mfb):=W(\mcO_F, \mfb)$, etc.
  Then, we have the following disjoint decomposition of $\GL_2(\A_F)$:
  \begin{equation}\label{eqn:decomp}
  \GL_2(\A_F)=\cup_{\nu=1}^h\GL_2(F)x_\nu^{-\iota} W(\mfb),
  \end{equation}
  where $\dis x_\nu^{-\iota} =\left(\begin{matrix} t_\nu^{-1} & \\ & 1\end{matrix}\right)$ with $\{t_\nu\}_{\nu=1}^h$ taken to be a complete set of representatives of the narrow class group of $F$. We note that such $t_\nu$ can be chosen so that the infinity part $t_{\nu, \infty}$ is $1$ for all $\nu$. For each $\nu$, we also put
  \begin{align*}
  \Gamma_\nu(\mfb) &= \GL_2(F)\cap x_\nu W(\mfb)x_\nu^{-1} \\ 
  &= \left\{ \pmat{a}{t_\mu^{-1}b}{t_\nu c}{d}\in\GL_2(F): \, \begin{matrix} a\in \mcO_\mfp, & b\in \mfa_\mfp^{-1}\Dif_\mfp^{-1}, & \\ c\in \mfb_\mfp\Dif_\mfp, & d\in\mcO_\mfp, & |ad-bc|_\mfp=1\end{matrix}\right\}. 
  \end{align*} 
  
  Let $\psi$ be a Hecke character of $\A_F^\times$ whose conductor divides $\mfb$ and $\psi_\infty$ is of the form
  \[ \psi_\infty(x)={\rm sgn}(x_\infty)^k|x_\infty|^{i\mu},\]
  with $\mu\in\R^n$ and $\sum_{j=1}^n \mu_j=0$. We let $M_k(\Gamma_\nu(\mfb), \psi_\mfb, \mu)$ denote the space of all functions $f_\nu$ that are holomorphic on $\h^n$ and 
  at cusps, satisfying
  \[ f_\nu ||_k \gamma=\psi_\mfb(\gamma)\det \gamma^{i\mu/2}f_\nu \]
  for all $\gamma$ in $\Gamma_\nu(\mfb)$. We note that such a function $f_\nu$ has a Fourier expansion
  \[ f_\nu(z)=\sum_{\xi\in F}a_\nu(\xi) \exp(2\pi i \xi z)\]
  where $\xi$ runs over all the totally positive elements in $t_\nu^{-1}\mcO_F$ and $\xi=0$. 
  A Hilbert modular form is a cusp form, if for all $\gamma \in \GL^+_2(F)$, the constant term of $f||_k\gamma$
  in its Fourier expansion is $0$, and the space of cusp forms with respect to $\Gamma_{\nu}(\mfb)$ is denoted by $M_k(\Gamma_\nu(\mfb), \psi_\mfb, \mu)$.

  Now, put $\mathbf{f}:=(f_1,\dots,f_n)$ where $f_\nu$ belongs $M_k(\Gamma_\nu(\mfb), \psi_\mfb, \mu)$ for each $\nu$, and define $\mathbf{f}$ to be a function on $\GL_2(\A_F)$ as
  \[ \mathbf{f}(g)=\mathbf{f}(\gamma x_\nu^{-\iota}w):=\psi_\mfb(w^\iota)\det w_\infty^{i\mu/2}(f_\nu||_k w_\infty)(i\!\! i)\]
  where $\gamma x_\nu^{-\iota}w\in\GL_2(F)x_\nu^{-\iota}W(\mfb)$ as in (\ref{eqn:decomp}), and  $w^\iota:=\omega_0(^t w)\omega_0^{-1}$ with $\dis \omega_0=\left(\begin{matrix} & 1 \\ -1 & \end{matrix}\right)$. The space of such $\mathbf{f}$ is denoted as $M_k(\psi_\mfb, \mu)=\prod_\nu M_k(\Gamma_\nu(\mfb), \psi_\mfb, \mu)$. Furthermore, the space consisting of all  $\mathbf{f}=(f_1,\dots,f_n)\in M_k(\psi_\mfb, \mu)$ satisfying
  \[ \mathbf{f}(sg)=\psi(s)\mathbf{f}(g) \quad \text{for any}\, s\in \A_F^\times \quad \text{and}\quad x\in\GL_2(\A_F)\]
  is denoted as $M_k(\mfb,\psi)$.  If $f_\nu \in S_k(\Gamma_\nu(\mfb), \psi_\mfb, \mu)$ for each $\nu$, then the space of such $\mathbf{f}$ is denoted by $S_k(\mfb,\psi)$
  
  Let $\mfm$ be an integral ideal of $F$ and write $\mfm=\xi t_\nu^{-1}\mcO_F$ with a totally positive element $\xi$ in $F$. Then, we define the Fourier coefficients
  of $\f$ as
  \begin{equation}\label{eqn:coeff}
  C(\mfm,\mathbf{f}) :=\begin{cases} N(\mfm)^{k_0/2}a_\nu(\xi)\xi^{-(k+i\mu)/2} \quad & \text{if}\quad \mfm=\xi t_\nu^{-1}\mcO_F\subset \mcO_F \\
  0 & \text{if} \quad \mfm \, \text{is not integral} \end{cases} 
  \end{equation}
  where  $k_0=\max \{k_1,\dots,k_n\}$.  
  
  \section{Statements of the main results}
  In this section, we shall state the main results of this article. Firstly, we prove a result on the simultaneous sign changes of the Fourier coefficients 
  of two Hilbert cuspidal forms of different integral weights. More precisely, we prove:
  \begin{thm}
     \label{maintheorem1}
   Let $\f$ and $\g$ be non-zero Hilbert cusp forms over $F$ of level $\mfc$ and different integral weights $k=(k_1,\ldots,k_n)$, $l=(l_1,\ldots,l_n)$,
   respectively. For each integral ideal $\mfm \subseteq \mcO_F$, let $C(\mfm,\f)$ and $C(\mfm,\g)$ denote the Fourier coefficients (as defined in ~\eqref{eqn:coeff}) 
   of $\f$ and $\g$, respectively.    	
   Further, assume that $C(\mfm,\f)$, $C(\mfm,\g)$ are real numbers. If $C(\mcO_F,\f)C(\mcO_F,\g)\neq 0$, then there exist infinitely many ideals
   $\mfm \subseteq \mcO_F$ such that $C(\mfm,\f) C(\mfm,\g)>0$ and infinitely many ideals $\mfm \subseteq \mcO_F$ such that $C(\mfm,\f)C(\mfm,\g)<0$.
  \end{thm}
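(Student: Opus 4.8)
The plan is to argue by contradiction, combining Landau's theorem on Dirichlet series with non-negative coefficients with the analytic continuation of the Rankin--Selberg convolution of $\f$ and $\g$. By the symmetry $\f\mapsto-\f$, which preserves the weight, the level and the hypothesis $C(\mcO_F,\f)C(\mcO_F,\g)\neq0$, it suffices to show that $C(\mfm,\f)C(\mfm,\g)<0$ for infinitely many $\mfm$; applying that to $-\f$ then yields infinitely many $\mfm$ with $C(\mfm,\f)C(\mfm,\g)>0$. So suppose, for contradiction, that $C(\mfm,\f)C(\mfm,\g)\ge0$ for all integral ideals $\mfm$ with $N(\mfm)\ge M_0$, and consider
\[ D(s):=\sum_{\mfm}\frac{C(\mfm,\f)\,C(\mfm,\g)}{N(\mfm)^{s}}. \]
By the Hecke bound for Fourier coefficients of Hilbert cusp forms this series converges absolutely in some right half-plane; its coefficients are eventually non-negative; and its $\mfm=\mcO_F$ coefficient $C(\mcO_F,\f)C(\mcO_F,\g)$ is non-zero, so $D\not\equiv0$.

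Next I would establish the meromorphic continuation of $D$. Write $\f$ and $\g$ as finite $\C$-linear combinations of normalized Hilbert Hecke eigenforms $\f_i$, $\g_j$ of the same respective weights and levels; by linearity of the Fourier coefficients, $D(s)=\sum_{i,j}D_{ij}(s)$ with $D_{ij}(s)=\sum_{\mfm}C(\mfm,\f_i)C(\mfm,\g_j)N(\mfm)^{-s}$. Rankin--Selberg theory for Hilbert modular forms over $F$ gives, up to finitely many Euler factors at the primes dividing the levels (which do not affect the analytic behaviour in the relevant half-plane), an identity $\zeta_F(2s-c_0)\,D_{ij}(s)=L(\f_i\otimes\g_j,s)$, where $c_0$ depends only on the weights $k$ and $l$ and $L(\f_i\otimes\g_j,s)$ is the Rankin--Selberg $L$-function. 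Since $\f_i$ and $\g_j$ have \emph{different} weights, the associated automorphic representations are not dual to one another, so $L(\f_i\otimes\g_j,s)$ has no pole at the edge of its critical strip and is therefore entire. Summing over $i,j$ we obtain $\zeta_F(2s-c_0)\,D(s)=L^{*}(s)$ with $L^{*}:=\sum_{i,j}L(\f_i\otimes\g_j,s)$ entire, and $L^{*}\not\equiv0$ because $D\not\equiv0$. Hence $D(s)=L^{*}(s)/\zeta_F(2s-c_0)$ continues meromorphically to $\C$, with poles only at the zeros of $\zeta_F(2s-c_0)$.

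I would then locate these singularities and invoke Landau. Because $\zeta_F$ is holomorphic and non-vanishing on $\{\Re s\ge1\}$ apart from its simple pole at $s=1$, the quotient $D(s)=L^{*}(s)/\zeta_F(2s-c_0)$ is holomorphic on the half-plane $\{\Re s\ge\tfrac{c_0+1}{2}\}$ (at $s=\tfrac{c_0+1}{2}$ the pole of $\zeta_F$ merely forces $D$ to vanish). On the other hand $D$ is not entire: $\zeta_F$ has infinitely many zeros in the critical strip, and the entire function $L^{*}$, being non-zero, can vanish at only finitely many of the corresponding points in any bounded region, so $D$ has poles, all of real part $\le\sigma^{*}\le\tfrac{c_0+1}{2}$, where $\sigma^{*}$ is the supremum of these real parts; moreover $D$ is holomorphic for $\Re s>\sigma^{*}$, and — away from an exceptional real zero of $\zeta_F$ at the relevant spot — at the point $s=\sigma^{*}$ as well. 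Now $D$ has eventually non-negative coefficients and, by the Hecke bound together with the existence of a pole, a finite abscissa of convergence $\sigma_0$; Landau's theorem then asserts that $s=\sigma_0$ is a singularity of $D$ and that $\sigma_0$ is not smaller than the real part of any singularity of $D$, whence $\sigma_0\ge\sigma^{*}$. But then $s=\sigma_0$ would be a singularity of $D$ lying in the region where $D$ is holomorphic — a contradiction. This proves $C(\mfm,\f)C(\mfm,\g)<0$ for infinitely many $\mfm$, and the remark about $-\f$ finishes the proof.

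The main obstacle is the analytic input just used. One needs, in the Hilbert setting over a totally real $F$, the entirety of the Rankin--Selberg convolution of two eigenforms of different integral weights, together with the precise relation — through a Dedekind zeta factor $\zeta_F(2s-c_0)$ — between that $L$-function and the naive Dirichlet series $D_{ij}$ of products of Fourier coefficients, so that the known analytic features of $\zeta_F$ (pole at $s=1$, non-vanishing on $\Re s\ge1$, zeros confined to the critical strip) can be leveraged to confine the poles of $D$ and trap its abscissa of convergence inside the half-plane of holomorphy; a little extra care is required at the single point $s=\sigma^{*}$ regarding possible real zeros of $\zeta_F$. The passage to Hecke eigenforms is what removes any eigenform hypothesis and lets one treat arbitrary Hilbert cusp forms with real Fourier coefficients; once the continuation is in hand, the sign change itself is the soft ``real zeros of Dirichlet series'' argument in the spirit of~\cite{GKR15}.
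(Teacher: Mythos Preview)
Your strategy via Landau and Rankin--Selberg is the right one, and your symmetry $\f\mapsto -\f$ for the second half is slicker than the paper's detour through the $U$-operator. But the heart of your argument has a gap. You write $D(s)=L^{*}(s)/\zeta_F(2s-c_0)$ and assert that $D$ has poles because ``$L^{*}$, being non-zero, can vanish at only finitely many of the corresponding points in any bounded region.'' This is vacuous: the non-trivial zeros of $\zeta_F(2s-c_0)$ lie in an \emph{unbounded} vertical strip, and nothing prevents a non-zero entire function from vanishing on an infinite discrete set there. You have not ruled out $L^{*}$ vanishing at every such zero, in which case $D$ is entire, Landau gives $\sigma_0=-\infty$, and there is no contradiction. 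Even granting some pole, your endgame requires comparing the rightmost \emph{real} singularity of $D$ with a \emph{non-real} one, which drags in the location of the trivial zeros of $\zeta_F$ and a possible Siegel zero --- issues you acknowledge but do not resolve.

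The paper sidesteps all of this by multiplying rather than dividing. After a sieve (Proposition~\ref{Key-Proposition}) that makes \emph{all} the relevant coefficients non-negative, it works with $L(s)=\zeta_F^{\mfc_1}(2s-(k_0+l_0)+2)\,R(s)$, itself a Dirichlet series with non-negative coefficients. Shimura's result makes the completed function $(\Gamma\text{-factors})\cdot L(s)$ entire because the weights differ, so $L$ is entire; Landau then gives convergence on all of $\R$; the explicit poles of the $\Gamma$-factors force $L$ to have real zeros in that region; and Lemma~\ref{lem1} then kills $L$ identically. The point is that this uses only the poles of $\Gamma$, which are known, and never the zeros of $\zeta_F$, which are not. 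Two minor notes: your eigenform decomposition is unnecessary (Shimura's continuation applies to arbitrary cusp forms) and mildly awkward since the $\f_i,\g_j$ need not have real coefficients; and Landau in its standard form needs non-negative, not merely eventually non-negative, coefficients --- the paper's sieve handles this, or you could subtract off the finite exceptional part, but either way it should be said.
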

  The second result is about the simultaneous non-vanishing of the Fourier coefficients, of two non-zero distinct primitive Hilbert cuspidal eigenforms, at prime powers of a fixed prime ideal
  has a positive density. More precisely, we prove:
%   Now, we are interested in the study of simultaneous non-vanishing of the Fourier coefficients of two non-zero primitive Hilbert cuspidal eigenforms at prime powers of a fixed prime ideal. 
   \begin{thm}
  \label{maintheorem2}
	Let $\f$ and $\g$ be distinct primitive Hilbert cuspidal non-CM eigenforms over $F$ with trivial nebentypus and  
	of levels $\mfc_1,\mfc_2$ and with integral weights $k=(k_1,\ldots,k_n)$, $l=(l_1,\ldots,l_n)$,
	respectively. We further assume that $k_1\equiv \dots \equiv k_n \equiv l_1\equiv \dots \equiv l_n\equiv 0 \pmod{2}$ 
	and each $k_j, l_j \geq 2$.	
	
	For each ideal $\mfm \subseteq \mcO_F$, let $C(\mfm,\f)$ and $C(\mfm,\g)$ denote the Fourier coefficients (as defined in ~\eqref{eqn:coeff}) 
	of $\f$ and $\g$, respectively. Then, for any prime ideal $\mfp \subseteq \mcO_F$ such that $\mfp \nmid \mfc_1\mfc_2\Dif_F$, the set 
	$$\{m\in\N | C(\mfp^m,\f) C(\mfp^m,\g)\neq 0 \}$$
	has positive density. 
  \end{thm}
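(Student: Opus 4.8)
The plan is to mimic the strategy of Gun--Kumar--Paul \cite{GKP18}, passing from Hilbert modular forms to their associated automorphic representations and exploiting the local components at the fixed prime $\mfp$. Since $\f$ and $\g$ are primitive non-CM eigenforms with trivial nebentypus, for a prime $\mfp \nmid \mfc_1\mfc_2\Dif_F$ both are unramified at $\mfp$, and the Hecke eigenvalues satisfy the recursion coming from the local Euler factor: writing $\lambda_\f(\mfp^m) := C(\mfp^m,\f)/C(\mcO_F,\f)$ (suitably normalized), one has $\lambda_\f(\mfp^m) = \frac{\alpha_\mfp^{m+1}-\beta_\mfp^{m+1}}{\alpha_\mfp-\beta_\mfp}$ where $\alpha_\mfp,\beta_\mfp$ are the Satake parameters with $\alpha_\mfp\beta_\mfp = N(\mfp)^{k_0-1}$ (the parity and size hypotheses $k_j\geq 2$ even guarantee this normalization makes sense and that $\alpha_\mfp/\beta_\mfp$ is not a root of unity in the non-CM case). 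First I would record this normalized recursion and the analogous one for $\g$, reducing the problem to showing that $\lambda_\f(\mfp^m)\lambda_\g(\mfp^m) \neq 0$ for a positive-density set of $m$.

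Next I would treat the two ways $\lambda_\f(\mfp^m)$ can vanish. Since $\lambda_\f(\mfp^m) = U_m(x_\f)$ where $U_m$ is the degree-$m$ Chebyshev-like polynomial and $x_\f = (\alpha_\mfp+\beta_\mfp)/N(\mfp)^{(k_0-1)/2}$ is the normalized eigenvalue, the zeros of $\lambda_\f(\mfp^\bullet)$ are governed by the argument $\theta_\f$ with $\alpha_\mfp/\beta_\mfp = e^{2i\theta_\f}$: one has $\lambda_\f(\mfp^m)=0$ iff $(m+1)\theta_\f \in \pi\Z$. If $\theta_\f/\pi$ is irrational, $\lambda_\f(\mfp^m)$ never vanishes; if $\theta_\f/\pi = r/s$ is rational in lowest terms, then $\lambda_\f(\mfp^m)=0$ precisely when $s \mid m+1$, a set of density $1/s$. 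The same dichotomy holds for $\g$ with some $\theta_\g$, $s'$. Therefore the exceptional set $\{m: \lambda_\f(\mfp^m)=0 \text{ or } \lambda_\g(\mfp^m)=0\}$ is contained in a union of two arithmetic progressions (or is empty/thinner), hence has density at most $1/s + 1/s' < 1$, so the complement has positive density. The edge case requiring care is $s=1$ or $s'=1$, i.e.\ $\theta=0$ or $\theta=\pi$, forcing $\alpha_\mfp=\pm\beta_\mfp$; but then $\alpha_\mfp^2 = \pm N(\mfp)^{k_0-1}$, which together with $\alpha_\mfp+\beta_\mfp$ being an algebraic integer and the Ramanujan bound $|\alpha_\mfp|=N(\mfp)^{(k_0-1)/2}$ can be excluded unless one is in a degenerate situation ruled out by the non-CM hypothesis --- this needs to be verified, possibly appealing to the Deligne bound for Hilbert cusp forms and the Blasius/Taylor construction of the associated Galois representation.

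The main obstacle I anticipate is twofold. First, making rigorous the claim that $\theta_\f/\pi \notin \{0,1\}$ (equivalently $\lambda_\f(\mfp^m) \neq 0$ for $m$ on a density-$1$ set, rather than being identically eventually zero): over $\Q$ this follows from classical arguments, but over a totally real field one must either invoke the full strength of the Ramanujan conjecture for Hilbert modular forms (known by Blasius, building on Brylinski--Labesse, Taylor, etc.) or argue purely algebraically using that $N(\mfp)^{k_0-1}$ is not a perfect square in the relevant cases. Second --- and this is where \cite{GKP18}'s argument over $\Q$ must genuinely be adapted --- one should double-check that ``positive density'' in the conclusion is the density with respect to $\N$ indexing $m$, not density of prime ideals, so that the reduction to arithmetic progressions in $m$ is exactly what is needed; this is a bookkeeping point but worth stating explicitly. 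A subtlety orthogonal to these is that when $C(\mcO_F,\f)$ or $C(\mcO_F,\g)$ could be zero the normalization breaks, but for a \emph{primitive} eigenform $C(\mcO_F,\f)=1$ after normalization, so this does not arise.
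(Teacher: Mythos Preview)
Your approach is essentially the same as the paper's: normalize the Fourier coefficients, use the Ramanujan bound to write $\beta(\mfp,\f)=2\cos\alpha_\mfp$, prove the Chebyshev-type formula $\beta(\mfp^m,\f)=\sin((m+1)\alpha_\mfp)/\sin\alpha_\mfp$, and then count the vanishing set as a union of arithmetic progressions in $m$. The paper carries this out via a four-case analysis on whether $\alpha_\mfp,\beta_\mfp$ lie in $\{0,\pi\}$ or in $(0,\pi)$, and whether the angles are rational multiples of $\pi$.

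There is one genuine confusion in your write-up, concerning exactly the edge case you flag. When $\theta_\f\in\{0,\pi\}$ (in your notation: the Satake parameters coincide up to sign), the formula $\sin((m+1)\theta)/\sin\theta$ is indeterminate, and you interpret this as ``$\lambda_\f(\mfp^m)=0$ for all $m$'', which would force you to \emph{exclude} this case via the non-CM hypothesis. In fact the opposite is true: taking the limit (or arguing directly from the recursion) gives $\beta(\mfp^m,\f)=\pm(m+1)$, which is \emph{never} zero. This is exactly the paper's Case~(1), and it is the most favourable case, not a degenerate one to be ruled out. No appeal to non-CM, Blasius, or Galois representations is needed here; the Ramanujan bound (cited from \cite{KKT18}) is the only external input. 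Indeed, the non-CM hypothesis in the statement is not actually used anywhere in the paper's proof.

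A smaller point: your inequality $1/s+1/s'<1$ is not automatic. Once the boundary case is set aside one has $s,s'\ge 2$, but $s=s'=2$ gives $1/s+1/s'=1$. The paper resolves this by separating the case $\alpha_\mfp=\beta_\mfp$ (its Case~(3)), where the two zero sets coincide and the density of zeros is $1/s\le 1/2$; in the remaining case $\alpha_\mfp\neq\beta_\mfp$ one cannot have $s=s'=2$, so the crude union bound suffices. You should make this explicit.
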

  \begin{cor}
   Assume that the hypothesis of the above theorem holds. Then, for any prime ideal $\mfp \nmid \mfc_1\mfc_2\Dif_F$, there exists infinitely $m \in \N$ such that $C(\mfp^m,\f) C(\mfp^m,\g)\neq 0$. 
  \end{cor}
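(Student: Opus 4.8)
The plan is to obtain this as an immediate consequence of Theorem~\ref{maintheorem2}, using nothing beyond the definition of density. Recall that a subset $S\subseteq\N$ has positive density precisely when
\[
\liminf_{x\to\infty}\frac{1}{x}\,\#\{m\le x : m\in S\}>0 .
\]
On the other hand, if $S$ were finite, then $\#\{m\le x : m\in S\}\le \#S$ for every $x$, so the quotient above would tend to $0$ as $x\to\infty$, forcing the density of $S$ to be $0$. Thus positivity of the density of $S$ rules out $S$ being finite; equivalently, a set of positive density is necessarily infinite.

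I would then apply this with $S=\{m\in\N : C(\mfp^m,\f)\,C(\mfp^m,\g)\neq 0\}$. Under the stated hypotheses on $\f$, $\g$ (distinct primitive non-CM eigenforms, trivial nebentypus, even weights with all $k_j,l_j\ge 2$), Theorem~\ref{maintheorem2} guarantees that for every prime ideal $\mfp\nmid\mfc_1\mfc_2\Dif_F$ this set $S$ has positive density. By the elementary observation of the previous paragraph, $S$ is therefore infinite, i.e., there exist infinitely many $m\in\N$ with $C(\mfp^m,\f)\,C(\mfp^m,\g)\neq 0$, which is exactly the assertion of the corollary. There is essentially no obstacle in this deduction: the genuine work is entirely contained in Theorem~\ref{maintheorem2}, and the corollary is merely its qualitative shadow obtained by discarding the quantitative density statement.
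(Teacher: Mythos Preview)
Your proposal is correct and matches the paper's approach: the paper states the corollary immediately after Theorem~\ref{maintheorem2} without any separate proof, treating it as an obvious consequence of the density statement, exactly as you have spelled out.
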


\section{Proof of Theorem~\ref{maintheorem1}}
 For the proof of Theorem~\ref{maintheorem1}, we need the following basic results.
 \begin{lem}
 	\label{lem1}
 	Let $s\in\C$ and $$R(s)=\sum_{n\geq 1}\frac{a(n)}{n^s}$$ be a Dirichlet series with real coefficients $a(n)(n \in \N)$. 
 	Assume that $a(n)\geq0$ or $a(n)\leq0$ for all $n \geq 1$. 
 	If $R(s)$ has a real zero $\alpha$ in the region of convergence, then $R(s)$ is identical zero. 
 \end{lem}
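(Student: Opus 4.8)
The plan is to reduce at once to the case of non-negative coefficients and then invoke the elementary fact that a convergent series of non-negative real numbers has sum zero only if every single term is zero. First I would observe that, after replacing $R(s)$ by $-R(s)$ if necessary, we may assume $a(n)\geq 0$ for all $n\geq 1$; this substitution affects neither the hypothesis (a real zero $\alpha$ of $R$ is a real zero of $-R$, in the same region of convergence) nor the desired conclusion.

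Next, since the real number $\alpha$ lies in the region of convergence of the Dirichlet series, the series $\sum_{n\geq 1} a(n) n^{-\alpha}$ converges to the finite value $R(\alpha)$, which equals $0$ by assumption. For every $n\geq 1$ the number $n^{-\alpha}$ is strictly positive (here we use that $\alpha\in\R$), so each summand $a(n) n^{-\alpha}$ is non-negative. A convergent series with non-negative terms has sum $0$ precisely when each of its terms is $0$; hence $a(n) n^{-\alpha}=0$, and therefore $a(n)=0$, for all $n\geq 1$. Consequently $R(s)=\sum_{n\geq 1} 0\cdot n^{-s}=0$ for every $s\in\C$, i.e.\ $R$ is identically zero.

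As for difficulties, there is essentially no obstacle here: the only point deserving a word of care is the step from ``the sum of the non-negative terms is $0$'' to ``every term is $0$'', and this requires no rearrangement or absolute-convergence argument, since for a series of non-negative terms convergence and absolute convergence coincide and the partial sums are non-decreasing and bounded above by $0$. This elementary statement is precisely the replacement for Landau's theorem on Dirichlet series with non-negative coefficients that is used, following the strategy of~\cite{GKR15}, to dispense with the Galois-conjugacy hypothesis present in~\cite{KS09}; in the proof of Theorem~\ref{maintheorem1} it will be applied to a Rankin--Selberg-type Dirichlet series built from the Fourier coefficients of $\f$ and $\g$.
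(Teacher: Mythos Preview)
Your argument is correct and follows essentially the same approach as the paper: reduce to non-negative coefficients by passing to $-R(s)$ if necessary, and then use the elementary fact that a convergent series of non-negative terms (equivalently, the observation that the partial sums are non-decreasing and bounded above by $0$) sums to zero only if every term vanishes. The added contextual remarks about how this replaces Landau's theorem in the strategy of~\cite{GKR15} are accurate and appropriate.
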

 \begin{proof}
 	Without loss of generality, we can assume that $a(n)\geq0$ for all $n\geq 1$. 
 	Denote the sequence of partial sums of $R(\alpha)$ by $s_i=\sum_{n=1}^i\frac{a(n)}{n^\alpha}$, for $i \geq 1$. 
 	Since $a(n)\geq0$, the sequence $\{ s_i \}$ is a monotonically increasing sequence. 
 	Hence, the sequence $\{s_i\}$ converges to it's least upper bound. Since, $R(\alpha)=0$, we get that, for $i \geq 1$, $s_i$ is zero. 
 	We can deduce that each $a(i)=0$ for each $i$. Hence, $R(s)$ is identical zero. 
 	
 	Now, if $a(n)\leq0$ for all $n\geq1$, then we get the required result by applying above argument with $-R(s)$.
 \end{proof}
 
 \begin{lem}\rm(\cite[Lemma 6]{GKR15})
 \label{lem2}
   	Let $s\in\C$ and $a(n)\in\R$. For $m\geq1$, consider the Dirichlet polynomial $$R(s):=\sum_{1\leq n\leq m} \frac{a(n)}{n^s}.$$  
    	If $R(s)$ has infinitely many real zeros, then $R(s)$ is identically zero. 
 \end{lem}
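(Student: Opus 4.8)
The plan is to restrict the complex variable to the real line and recognise $R$ there as an exponential polynomial. Writing $s = x \in \R$, we have $R(x) = \sum_{1 \le n \le m} a(n) e^{-x\log n}$, a finite real linear combination of the exponentials $x \mapsto e^{-x\log n}$ whose frequencies $0 = -\log 1 > -\log 2 > \cdots > -\log m$ are pairwise distinct. It therefore suffices to establish the classical fact that a non-trivial real exponential polynomial $\sum_{j=1}^{r} c_j e^{\lambda_j x}$, with the $\lambda_j \in \R$ pairwise distinct and every $c_j \neq 0$, has at most $r-1$ real zeros. Granting this, $R$ having infinitely many real zeros forces $R(x)=0$ for all $x \in \R$; and since the functions $x \mapsto n^{-x}$ ($1 \le n \le m$) are $\R$-linearly independent — a non-trivial combination being a non-trivial exponential polynomial, hence not identically zero by the same fact — every $a(n)$ vanishes, so $R(s) \equiv 0$.

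For the classical fact I would argue by induction on $r$. When $r=1$ the function $c_1 e^{\lambda_1 x}$ never vanishes. For the inductive step, relabel so that $\lambda_1 < \lambda_2 < \cdots < \lambda_r$. Dividing by the nowhere-vanishing $e^{\lambda_1 x}$ does not change the zero set, so it is enough to count real zeros of $g(x) := c_1 + \sum_{j=2}^{r} c_j e^{(\lambda_j-\lambda_1)x}$. Its derivative $g'(x) = \sum_{j=2}^{r} c_j(\lambda_j-\lambda_1) e^{(\lambda_j-\lambda_1)x}$ is again an exponential polynomial of the required shape, now with $r-1$ terms, pairwise distinct (positive) frequencies and non-zero coefficients, so by the inductive hypothesis it has at most $r-2$ real zeros. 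By Rolle's theorem, strictly between two consecutive real zeros of $g$ there lies a real zero of $g'$; hence $g$, and therefore the original exponential polynomial, has at most $(r-2)+1 = r-1$ real zeros. Applying this with some $r \le m$ and $\lambda_n = -\log n$ (after discarding those $n$ with $a(n)=0$, the case of all $a(n)=0$ being trivial) shows that $R(x)$ has at most $m-1$ real zeros unless it is identically zero on $\R$, which is the contrapositive of Lemma~\ref{lem2}.

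The only genuinely delicate point is the bookkeeping in the induction: one must strip out any vanishing coefficients before invoking the hypothesis, so that the bound is applied with the correct count of non-zero terms, and one must check that subtracting $\lambda_1$ keeps the remaining frequencies pairwise distinct and non-zero. Everything else — the passage from ``$R$ vanishes on all of $\R$'' to ``all coefficients vanish'', and from there to ``$R(s)\equiv 0$'' — is immediate from the linear independence noted above (equivalently, from analytic continuation, since $R$ is entire and its real zero set would then have an accumulation point). This is precisely \cite[Lemma~6]{GKR15}; the argument sketched here merely reproduces it in self-contained form so that Theorem~\ref{maintheorem1} rests only on Lemmas~\ref{lem1} and~\ref{lem2}.
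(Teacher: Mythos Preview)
Your argument is correct. Note that the paper itself does not supply a proof of this lemma: it is simply quoted from \cite[Lemma~6]{GKR15}, so there is no in-paper argument to compare against. Your induction via Rolle's theorem is the standard proof of the zero-count bound for real exponential polynomials, and the passage from ``$R$ vanishes on $\R$'' to ``all $a(n)=0$'' to ``$R(s)\equiv 0$'' is clean. The bookkeeping you flag (stripping vanishing coefficients before applying the inductive hypothesis, and checking that the shifted frequencies $\lambda_j-\lambda_1$ remain distinct and non-zero) is handled correctly.
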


 \begin{prop}\rm(\cite[Proposition 2.3]{Shi78})
 	For any integral ideal $\mfq\subseteq\mcO_F$ and every $\f\in S_k(\mfc,\psi)$, 
 	there exists an unique element of $S_k(\mfq\mfc,\psi)$, written as $\f|\mfq$, such that
 	\begin{equation}\label{Hecke:relation}
 	C(\mfm,\f|\mfq)=C(\mfq^{-1}\mfm,\f)
 	\end{equation}
 \end{prop}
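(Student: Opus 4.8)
The plan is to realise the operator $\f\mapsto\f|\mfq$ as a normalised right translation on $\GL_2(\A_F)$, to check that it preserves the space, and then to read off the Fourier coefficients of the translate directly. Fix a finite idele $\mathbf{q}\in\A_{F,f}^\times$ with $\mathbf{q}\,\mcO_F=\mfq$ and $\mathbf{q}_v=1$ at every archimedean place, put $\rho_\mfq:=\psmat{1}{0}{0}{\mathbf{q}}\in\GL_2(\A_F)$ (trivial infinite component), and define
\[
(\f|\mfq)(x):=c(\mfq)\,\f(x\rho_\mfq),\qquad x\in\GL_2(\A_F),
\]
where $c(\mfq)$ is a normalising scalar, a power of $N(\mfq)$ (and, when $\psi$ is non-trivial, also a value of $\psi$), to be pinned down below. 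Since a cusp form is determined by its Fourier expansion --- each component $f_\nu$ is recovered from the $a_\nu(\xi)$, hence from the $C(\cdot,\cdot)$ --- the uniqueness assertion and the independence of the construction from the choice of $\mathbf{q}$ follow automatically once~\eqref{Hecke:relation} is established; so it suffices to prove $\f|\mfq\in S_k(\mfq\mfc,\psi)$ together with~\eqref{Hecke:relation}.

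For $\f|\mfq\in S_k(\mfq\mfc,\psi)$: left $\GL_2(F)$-invariance is inherited from $\f$ because the translation is on the right, and $(\f|\mfq)(sx)=\psi(s)(\f|\mfq)(x)$ for $s\in\A_F^\times$ holds because $s$ is central, so the nebentypus remains $\psi$ rather than being twisted. For the right $W(\mfq\mfc)$-behaviour one checks, place by place, the containment $\rho_\mfq^{-1}\,W(\mfq\mfc)\,\rho_\mfq\subseteq W(\mfc)$ --- at a finite place this reduces to $\mfq\,\Dif_\mfp^{-1}\subseteq\Dif_\mfp^{-1}$, and at $\infty$ it is trivial since $\rho_\mfq$ acts there as the identity --- so that for $w\in W(\mfq\mfc)$ one has $\f(xw\rho_\mfq)=\f\bigl((x\rho_\mfq)(\rho_\mfq^{-1}w\rho_\mfq)\bigr)$, to which the transformation law of $\f$ applies. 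One then checks that the automorphy data match up: conjugation by $\rho_\mfq$ fixes the diagonal entries of $w^\iota$, so (using that the conductor of $\psi$ divides $\mfc$) the character value $\psi_\mfc$ for $\f$ equals the $\psi_{\mfq\mfc}$ wanted for $\f|\mfq$, while the archimedean factor $\det(\cdot)^{i\mu/2}$ is left untouched. Holomorphy on $\h^n$ and at the cusps, and the cusp condition, pass over from $\f$. The delicate point is that $x_\nu^{-\iota}\rho_\mfq$ need not lie in the $W(\mfq\mfc)$-double coset of $x_\nu^{-\iota}$: under the determinant identification with the narrow class group it corresponds to $[t_\nu]^{-1}[\mfq]$, not $[t_\nu]^{-1}$, so translation by $\rho_\mfq$ permutes the components of $\f$ according to the class of $\mfq$, and the factors $\psi(\cdot)$ and $\det(\cdot)^{i\mu/2}$ have to be carried through the identification of $\GL_2(F)x_\nu^{-\iota}W(\mfc)$ with $\GL_2(F)x_{\nu'}^{-\iota}W(\mfq\mfc)$.

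For~\eqref{Hecke:relation} I would evaluate $\f|\mfq$ on the torus--unipotent elements $x_\nu^{-\iota}\psmat{\eta}{\beta}{0}{1}$ ($\eta\in\A_F^\times$, $\beta\in\A_F$) that generate the Fourier expansion. Since
\[
\psmat{\eta}{\beta}{0}{1}\rho_\mfq=\psmat{\eta}{\beta\mathbf{q}}{0}{\mathbf{q}}=\mathbf{q}\cdot\psmat{\mathbf{q}^{-1}\eta}{\beta}{0}{1},
\]
pulling the central idele $\mathbf{q}$ to the left (which produces the factor $\psi(\mathbf{q})$) shows that the translation replaces the torus parameter $\eta$ governing the $\mfm$-th Fourier term by $\mathbf{q}^{-1}\eta$, hence divides the associated ideal by $\mfq$; in particular the $\mfm$-th coefficient of $\f|\mfq$ vanishes unless $\mfq\mid\mfm$, consistent with the level $\mfq\mfc$. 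Feeding this into~\eqref{eqn:coeff} and using $N(\mfm)^{k_0/2}/N(\mfq^{-1}\mfm)^{k_0/2}=N(\mfq)^{k_0/2}$, one fixes the power of $N(\mfq)$ in $c(\mfq)$ to absorb this ratio and the $\psi$-value in $c(\mfq)$ to cancel the $\psi(\mathbf{q})$ just produced; the remaining powers of $\xi$ cancel by the design of the normalisation in~\eqref{eqn:coeff}, and one arrives at $C(\mfm,\f|\mfq)=C(\mfq^{-1}\mfm,\f)$. Together with the first paragraph this yields existence, the formula, uniqueness and well-definedness.

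The step I expect to be the main obstacle is precisely this adelic bookkeeping: making sure that right translation by $\rho_\mfq$ returns an element of $S_k(\mfq\mfc,\psi)$ with \emph{the same} nebentypus $\psi$ (not a twist of it) and with \emph{no} spurious constant in~\eqref{Hecke:relation}. The care lies in tracking how $\rho_\mfq$ interacts with the decomposition~\eqref{eqn:decomp}, the induced permutation of the components by the narrow ideal class of $\mfq$, and the normalisations hidden in the factor $N(\cdot)^{k_0/2}$ of~\eqref{eqn:coeff} and in $\det(\cdot)^{i\mu/2}$; once these are under control, the statement follows.
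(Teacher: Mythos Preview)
The paper does not supply its own proof of this proposition; it is quoted verbatim from Shimura~\cite[Proposition~2.3]{Shi78} and used as a black box. Your construction---defining $\f|\mfq$ as a normalised right translate by a diagonal adele with ideal $\mfq$, checking that conjugation carries $W(\mfq\mfc)$ into $W(\mfc)$, tracking the permutation of narrow ideal classes, and then reading the shift $\mfm\mapsto\mfq^{-1}\mfm$ off the torus parameter in the Whittaker/Fourier expansion---is exactly the argument Shimura gives, so your proposal is correct and aligned with the cited source.
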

 
 \begin{prop}\rm(\cite[Page 124]{Pan91})\label{Uoperator}
 	%  (\rm \cite[Theorem 4]{GKR15})
 	For any integral ideal $\mfq\subseteq\mcO_F$ and every $\f\in S_k(\mfc,\psi)$, 
 	there exists an unique element of $S_k(\mfq\mfc,\psi)$, written as $\f|U(\mfq)$, such that
 	\begin{equation}\label{U:operator}
 	C(\mfm,\f|U(\mfq))=C(\mfq\mfm,\f)
 	\end{equation} 
 \end{prop}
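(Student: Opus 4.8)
The plan is to realise $U(\mfq)$ as an adelic Hecke (double--coset) operator and then to read off its effect on Fourier expansions from \eqref{eqn:coeff}. Fix an idele $q\in\A_F^\times$ with $q\mcO_F=\mfq$, normalised so that $q_\infty=1$, $q_\mfp=1$ for $\mfp\nmid\mfq$, and $\ord_\mfp(q_\mfp)=\ord_\mfp(\mfq)$ for $\mfp\mid\mfq$; with the conventions of the preliminary section this produces an element $\eta_\mfq\in\GL_2(\A_F)$, built from $\psmat{1}{0}{0}{q}$, such that the double coset $W(\mfq\mfc)\,\eta_\mfq\,W(\mfc)$ is a disjoint union $\bigsqcup_{i=1}^{N(\mfq)}\alpha_i\,W(\mfc)$ of $N(\mfq)$ right $W(\mfc)$--cosets, the $\alpha_i$ being representable, at the primes dividing $\mfq$, by upper--triangular matrices $\psmat{1}{b_i}{0}{q}$ with $b_i$ ranging over a lattice quotient of order $N(\mfq)$, and trivial elsewhere. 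One then sets
\[
  \f\,|\,U(\mfq)\;:=\;N(\mfq)^{\,c(k)}\sum_{i=1}^{N(\mfq)}\f(\,\cdot\;\alpha_i\,),
\]
with $c(k)$ a normalising exponent depending only on the weight $k$ (in the classical one--variable case it is $k/2-1$), chosen precisely so that \eqref{U:operator} holds on the nose.

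The verification then splits into four essentially formal steps. (i) \emph{Membership in $S_k(\mfq\mfc,\psi)$:} left $\GL_2(F)$--invariance of $\f$, its right $W(\mfc)$--equivariance with character $\psi_\mfc$, and the central--character relation $\f(sg)=\psi(s)\f(g)$ together make $\sum_i\f(g\alpha_i)$ independent of the choice of the $\alpha_i$; since the union $\bigsqcup_i\alpha_iW(\mfc)$ is a full $W(\mfq\mfc)$--double coset, hence stable under left multiplication by $W(\mfq\mfc)$, the sum satisfies the transformation law for $W(\mfq\mfc)$ with the prescribed nebentypus. (ii) \emph{Holomorphy on $\h^n$ and at the cusps:} rewriting $g\alpha_i$ in the standard form \eqref{eqn:decomp} expresses each summand, at the archimedean places, as a classical slash $f_\nu\,||_k\,\beta$ of a component of $\f$ by some $\beta\in\GL_2^+(\R)^n$, so holomorphy on $\h^n$ and moderate growth at every cusp of the groups $\Gamma_\nu(\mfq\mfc)$ are inherited from $\f$. (iii) \emph{Cuspidality:} for any $\gamma\in\GL_2^+(F)$ the constant term of $(\f\,|\,U(\mfq))\,||_k\,\gamma$ is a finite linear combination of constant terms of translates of $\f$, all of which vanish because $\f$ is a cusp form. (iv) \emph{Fourier expansion:} substituting the upper--triangular $\alpha_i=\psmat{1}{b_i}{0}{q}$ into $f_\nu(z)=\sum_\xi a_\nu(\xi)\exp(2\pi i\xi z)$ and summing over the $b_i$ gives a finite exponential sum that vanishes unless the index $\xi$ satisfies the divisibility forcing $\xi t_\nu^{-1}\mcO_F$ to be a multiple of $\mfq$, and on the surviving terms it reindexes so as to extract the coefficients of $\f$ at the ideals $\mfq\mfm$ out of those at $\mfm$; carrying the factors $N(\cdot)^{k_0/2}$ and $\xi^{-(k+i\mu)/2}$ of \eqref{eqn:coeff} through this, and using that there are $N(\mfq)$ equal contributions, yields $C(\mfm,\f\,|\,U(\mfq))=C(\mfq\mfm,\f)$ once $c(k)$ is calibrated to absorb the mismatch $N(\mfq\mfm)^{k_0/2}=N(\mfq)^{k_0/2}N(\mfm)^{k_0/2}$.

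Uniqueness is immediate: if $\f_1,\f_2\in S_k(\mfq\mfc,\psi)$ both satisfy $C(\mfm,\f_j)=C(\mfq\mfm,\f)$ for every integral ideal $\mfm$, then $\f_1-\f_2$ is a Hilbert cusp form all of whose Fourier coefficients vanish; hence every expansion coefficient $a_\nu(\xi)$ at totally positive $\xi$ vanishes and, by cuspidality, so do the constant terms at all cusps, whence $\f_1=\f_2$. The one step that calls for genuine care --- and is the main obstacle --- is (iv): tracking the archimedean normalisation $\xi^{-(k+i\mu)/2}$ and the norm factors $N(\cdot)^{k_0/2}$ of \eqref{eqn:coeff} through the coset decomposition, where the interplay between the individual weights $k_j$ and $k_0=\max_j k_j$ must be kept straight, together with checking that the construction does not depend on the idele $q$ chosen to represent $\mfq$ (two choices differ by an element of $\prod_\mfp\mcO_\mfp^\times$, which merely permutes the $\alpha_i$ within their $W(\mfc)$--cosets and hence leaves the sum unchanged). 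Everything else is routine Hecke--operator bookkeeping.
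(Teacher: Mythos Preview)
The paper does not supply its own proof of this proposition: it is stated with a citation to Panchishkin's \emph{Lecture Notes in Mathematics} \textbf{1471} (p.~124) and then used as a black box, exactly as the companion proposition on $\f|\mfq$ is quoted from Shimura without argument. So there is no in-paper proof to compare against.

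Your sketch is the standard construction of the $U(\mfq)$ operator as a double-coset operator, and the outline is sound. A couple of points deserve a little more care if you intend this as a full proof rather than a strategy. First, the coset decomposition you write down, with $N(\mfq)$ representatives of the shape $\psmat{1}{b_i}{0}{q}$, is correct when $\mfq$ is a prime power dividing the level (which is the situation after enlarging the level to $\mfq\mfc$), but for general $\mfq$ you should either factor $\mfq$ into prime powers and compose, or be explicit that the $b_i$ run over $\mcO_F/\mfq$ at each relevant place. Second, leaving the normalising exponent $c(k)$ to be ``calibrated'' at the end is fine heuristically, but the honest bookkeeping is that the archimedean slash action already carries a determinant factor $(\det)^{k/2}$, and together with the $N(\mfm)^{k_0/2}$ in \eqref{eqn:coeff} this forces $c(k)=k_0/2-1$; it is worth writing this out once rather than asserting it. With those two points tightened, your argument is a correct reconstruction of what Panchishkin does.
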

 We need the following proposition in the proof Theorem~\ref{maintheorem1}.
%  By~\eqref{Hecke:relation} and~\eqref{U:operator}, we have the following
 \begin{prop}
 	\label{Key-Proposition}
 	Let $\f\in S_k(\mfc,\psi)$ and $\mfq$ be an integral ideal of $\mcO_F$. 
 	Then $\g=\f-(\f|U(\mfq))|\mfq$ is a Hilbert cusp form of weight $k$ and level $\mfq^2\mfc$.
 	Further, it has the property that $C(\mfm\mfq,\g)=0$ and $C(\mfm,\g)=C(\mfm,\f)$, if $(\mfm,\mfq)=1$. 
 \end{prop}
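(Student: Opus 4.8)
The plan is to realise $\g$ as a difference of two cusp forms lying in one common space, obtained by composing the two level-raising operators recalled above, and then to read off the Fourier coefficients of $\g$ from the defining identities \eqref{Hecke:relation} and \eqref{U:operator} together with the convention in \eqref{eqn:coeff} that $C(\cdot,-)$ vanishes on non-integral ideals. (Informally, $\f-(\f|U(\mfq))|\mfq$ is the ideal-theoretic version of the operation that depletes the $\mfq$-part of the Fourier expansion of a cusp form.)

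To handle the assertion on weight and level, I would argue as follows. By Proposition~\ref{Uoperator}, $\f|U(\mfq)\in S_k(\mfq\mfc,\psi)$; applying \cite[Proposition 2.3]{Shi78} to $\f|U(\mfq)$ (with $\mfc$ there replaced by $\mfq\mfc$) shows $(\f|U(\mfq))|\mfq\in S_k(\mfq\cdot\mfq\mfc,\psi)=S_k(\mfq^2\mfc,\psi)$. Since $\mfc\mid\mfq^2\mfc$, we have $K_\mfp(\mfq^2\mfc)\subseteq K_\mfp(\mfc)$ for every $\mfp$, hence $W(\mfq^2\mfc)\subseteq W(\mfc)$ and $\Gamma_\nu(\mfq^2\mfc)\subseteq\Gamma_\nu(\mfc)$ for every $\nu$, so $S_k(\mfc,\psi)\subseteq S_k(\mfq^2\mfc,\psi)$; moreover this inclusion does not change the Fourier coefficients $C(\mfm,\f)$, since they are computed by the same formula \eqref{eqn:coeff} at any level. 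Therefore $\g=\f-(\f|U(\mfq))|\mfq$ is a well-defined element of $S_k(\mfq^2\mfc,\psi)$, i.e.\ a Hilbert cusp form of weight $k$ and level $\mfq^2\mfc$.

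Next I would compute $C(\mfn,\g)$ for an arbitrary integral ideal $\mfn\subseteq\mcO_F$. By linearity of $C$ and \eqref{Hecke:relation},
\[ C(\mfn,\g)=C(\mfn,\f)-C\bigl(\mfn,(\f|U(\mfq))|\mfq\bigr)=C(\mfn,\f)-C(\mfq^{-1}\mfn,\f|U(\mfq)). \]
If $\mfq\nmid\mfn$, then $\mfq^{-1}\mfn$ is not an integral ideal, so $C(\mfq^{-1}\mfn,\f|U(\mfq))=0$ by the convention in \eqref{eqn:coeff}, whence $C(\mfn,\g)=C(\mfn,\f)$; in particular, since $(\mfm,\mfq)=1$ forces $\mfq\nmid\mfm$ (when $\mfq$ is a proper ideal, the only case in which the assertion has content), this gives $C(\mfm,\g)=C(\mfm,\f)$. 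If $\mfq\mid\mfn$, then $\mfq^{-1}\mfn$ is integral, and \eqref{U:operator} gives $C(\mfq^{-1}\mfn,\f|U(\mfq))=C(\mfq\cdot\mfq^{-1}\mfn,\f)=C(\mfn,\f)$, so $C(\mfn,\g)=0$; taking $\mfn=\mfm\mfq$, which is always divisible by $\mfq$, we obtain $C(\mfm\mfq,\g)=0$. This completes the proof.

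I do not expect a genuine obstacle, since the argument is a purely formal manipulation of the two operators. The points that require care are: (i) verifying that the inclusion $S_k(\mfc,\psi)\subseteq S_k(\mfq^2\mfc,\psi)$ is compatible with the normalisation \eqref{eqn:coeff}, so that subtracting forms whose a priori levels differ is legitimate and preserves Fourier coefficients; and (ii) bookkeeping the integrality of $\mfq^{-1}\mfn$, so that \eqref{U:operator} is used only when $\mfq\mid\mfn$ and the vanishing for $\mfq\nmid\mfn$ is obtained directly from \eqref{eqn:coeff}.
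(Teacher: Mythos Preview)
Your argument is correct and follows essentially the same route as the paper: the Fourier-coefficient computation via \eqref{Hecke:relation}, \eqref{U:operator}, and the non-integrality of $\mfq^{-1}\mfm$ when $(\mfm,\mfq)=1$ is exactly what the paper does. Your treatment is in fact slightly more thorough, since you also spell out why $\g$ lies in $S_k(\mfq^2\mfc,\psi)$, which the paper leaves implicit.
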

 
 \begin{proof}
 	Observe that $C(\mfm\mfq,\g)= C(\mfm\mfq, \f-(\f|U(\mfq))|\mfq) = C(\mfm\mfq,\f)-C(\mfm\mfq,(\f|U(\mfq))|\mfq)$.
 	Now, let us compute $C(\mfm\mfq,\f|U(\mfq)|\mfq) = C(\mfm, \f|U(\mfq)) = C(\mfm\mfq, \f)$. Hence, $C(\mfm\mfq,\g)=0$.
 	
 	Now, let us look at the expression when $(\mfm,\mfq)=1$. 
 	$$C(\mfm,\g)= C(\mfm, \f-(\f|U(\mfq))|\mfq) = C(\mfm,\f)-C(\mfm,(\f|U(\mfq))|\mfq).$$
        However, 
 	$C(\mfm,\f|U(\mfq)|\mfq)= C(\mfq^{-1}\mfm,\f|U(\mfq))= 0$, since $\mfq^{-1}\mfm$ is not an integral ideal.
 	Hence, $C(\mfm,\g)= C(\mfm, \f)$, if $(\mfm,\mfq)=1$.
 	
 \end{proof}

Now, we are in a position to prove Theorem~\ref{maintheorem1}. 
 \begin{proof}
 	By hypothesis, we have $C(\mcO_F,\f)C(\mcO_F,\g)\neq 0$.
 	First, we will show that there exist infinitely many $\mfm\subseteq\mcO_F$ such that 
 	\begin{equation}
 	\label{negative}
 	\frac{C(\mfm,\f)C(\mfm,\g)}{C(\mcO_F,\f)C(\mcO_F,\g)}<0
 	\end{equation}
 	Without loss of generality, we can assume that $C(\mcO_F,\f)C(\mcO_F,\g)>0$ 
 	as otherwise we replace $\g$ by $-\g$.
 	
 	If~\eqref{negative} is not true, then there exist an ideal $\mfm^\prime\subseteq\mcO_F$ such that 
 	\begin{equation}\label{positive}
 	C(\mfm,\f)C(\mfm,\g)\geq 0
 	\end{equation}
 	for all $\mfm\subseteq\mcO_F$ with $N(\mfm)\geq N(\mfm^\prime)$. 
 	Set $\mfn:=\prod_{N(\mfp)\leq N(\mfm^\prime)} \mfp$, where $\mfp$ are prime ideals of $\mcO_F$. 
 	
 	Suppose $\f_1$ and $\g_1$ are Hilbert modular cusp forms obtained from $\f$ and $\g$ respectively,
 	by applying the Proposition~\ref{Key-Proposition} to $\f$ and $\g$ with the ideal $\mfn$. 
 	Clearly, $\f_1$ and $\g_1$ are also Hilbert cusp forms of  level $k$ and $l$ respectively, and of level  $\mfc_1$. 
 	We just say that the level is $\mfc_1$, because as such we do not need the explicit level in the further calculations.
 	
 	For $s\in\C$ with $\mathrm{Re}(s)\gg 1$, the Rankin-Selberg $L$-function of $\f_1$ and $\g_1$ is defined by 
 	\begin{equation}\label{positive:cofficients}
 	R_{\f_1,\g_1}(s):=\sum_{\mfm\subseteq\mcO_F,(\mfm,\mfn)=1} \frac{C(\mfm,\f)C(\mfm,\g)}{N(\mfm)^s}.
 	\end{equation}
 	In above summation $C(\mfm,\f)C(\mfm,\g)\geq 0$, since, if $N(\mfm)\leq N(\mfm^\prime)$ then $\mfm=\prod_{\mfp_i|\mfn}\mfp_i^{e_i}$ implies  $(\mfm,\mfn)\neq1$. 
 	For $\mathrm{Re}(s)\gg 1$, we set 
 	$$L_{\f_1,\g_1}(s):=\zeta_F^{\mfc_1}(2s-(k_0+l_0)+2)R_{\f_1,\g_1}(s),$$
 	where $\zeta_F^{\mfc_1}(s)=\prod_{\mfp|\mfc_1, \mfp:\text{prime}}(1-N(\mfp)^{-s})\zeta_F(s)$, 
 	where $\zeta_F(s)=\sum_{\mfm\subseteq\mcO_F}N(\mfm)^{-s}$ is Dedekind zeta function of $F$. 
%  	This implies that 
%  	\begin{equation}\label{L function}
%  	L_{\f_1,\g_1}(s):=\prod_{\mfp|\mfc_1, \mfp:\text{prime}}(1-N(\mfp)^{-s})\zeta_F(s)R_{\f_1,\g_1}(s)
%  	\end{equation}
 	By the Euler expansion of Dedekind zeta function of $F$,  we get that
%  	we have
%  	$$\zeta_F(s)=\sum_{\mfm\subseteq\mcO_F}N(\mfm)^{-s}=\prod_{\mfp:\text{prime}}(1-N(\mfp)^{-s})^{-1}.$$
%  	Hence, we get that
   \begin{align*}
   	\zeta_F^{\mfc_1}(s) =&\prod_{\mfp|\mfc_1, \mfp:\text{prime}}(1-N(\mfp)^{-s})\prod_{\mfp:\text{prime}}(1-N(\mfp)^{-s})^{-1}\\
   	                    &=\sum_{\mfm\subseteq\mcO_F,(\mfm,\mfc_1)=1}\frac{1}{N(\mfm)^{s}}=\sum_{n=1}^{\infty}\frac{a_n(\mfc_1)}{n^s},
   \end{align*}
   where $a_n(\mfc_1)$ is the number of integral ideals of norm $n$ that are co-prime to $\mfc_1$.
   %\begin{align*}
 	%&L_{\f_1,\g_1}(s):=\sum_{n=1}^{\infty}\frac{a_n^{\mfc_1}}{n^s}R_{\f_1,\g_1}(s)\\
 	%\implies & L_{\f_1,\g_1}(s):=\prod_{(\mfp,\mfc_1)=1, \mfp:\text{prime}}((1-N(\mfp)^{-s})^{-1})R_{\f_1,\g_1}(s) \\
 	%\implies &  
 	Hence, we can write
 	$$L_{\f_1,\g_1}(s)=\sum_{n=1}^{\infty}\frac{a_n(\mfc_1)n^{(k_0+l_0)-2}}{n^{2s}}\sum_{\mfm\subseteq\mcO_F,(\mfm,\mfn)=1} \frac{C(\mfm,\f)C(\mfm,\g)}{N(\mfm)^s}.$$
 %	\end{align*}
 	Now, we can re-write $$L_{\f_1,\g_1}(s)=\sum_{m=1}^\infty\frac{\mfb_m^{\mfc_1}(\f_1,\g_1)}{m^s},$$
 	where $$\mfb_m^{\mfc_1}(\f_1,\g_1)=\sum_{n^2|m}\left( a_n(\mfc_1)n^{(k_0+l_0)-2} \sum_{(\mfm,\mfn)=1,N(\mfm)=m/n^2}C(\mfm,\f)C(\mfm,\g)\right).$$
 	In the above summation $\mfb_m^{\mfc_1}(\f_1,\g_1)\geq 0$ for all $m$ because $C(\mfm,\f)C(\mfm,\g)\geq 0$, for all $(\mfm,\mfn)=1$,
 	by~\eqref{positive}. Observe that $\mfb_1^{\mfc_1}(\f_1,\g_1)=C(\mcO_F,\f)C(\mcO_F,\g)$.
 	
 	Denote $k_0 := \mathrm{max}\{k_1, k_2, \ldots, k_n\}$ and $l_0 := \mathrm{max} \{l_1, l_2, \ldots, l_n\}$.
 	Define, for any $j$, $k_j^{\prime} := k_0 - k_j$, and similarly, define $l_j^{\prime}$.
 	
 	Now, look at the complete $L$-function, defined by the product
 	% $$\Lambda_{\f_1,\g_1}(s)= (2\pi)^{-2sn-\kappa}N(\mfc_1 D_F^2)^s\prod_{j=1}^n\Gamma(s+\frac{|k_j-l_j|}{2})\prod_{j=1}^{n}\Gamma(s-1+\frac{k_j+l_j}{2})L_{\f_1,\g_1}(s)$$ 
 	$$\Lambda_{\f_1,\g_1}(s)= \prod_{j=1}^n \Gamma\left(s+1+ \frac{k_j-l_j-k_0-l_0}{2}\right)\Gamma\left(s-\frac{k^{\prime}_j+l^{\prime}_j}{2}\right)L_{\f_1,\g_1}(s)$$ 
 	can be continued to a holomorphic function on the whole plane, since the weights are different (cf.~\cite[Proposition 4.13]{Shi78}).
 	% Shimura proved that () $\Lambda_{\f_1,\g_1}(s)$ has analytic continuation to $\C$ as an entire function. 
 	As the $\Gamma$-function is extended by analytic continuation to all complex numbers except the non-positive integers, where the function has simple poles,
 	we get that that function $L_{\f_1,\g_1}(s)$ is also entire. 
 	
 	By Landau's Theorem it follows that the Dirichlet series $L_{\f_1,\g_1}(s)$ converges everywhere. 
 	Observe that the function $L_{\f_1,\g_1}(s)$ has real zeros because the $\Gamma$-factors have poles at non-positive integers.
 	By Lemma~\ref{lem1}, we have that $\mfb_m^{\mfc_1}(\f_1,\g_1)=0$ for all $m$. 
 	This contradicts the assumption that $C(\mcO_F,\f)C(\mcO_F,\g) \neq0$ This completes the proof of $~\eqref{negative}$.

 	In order to complete the proof of the Theorem ~\ref{maintheorem1}, we need to show that there exist infinitely many $\mfm\subseteq\mcO_F$ such that 
 	$$\frac{C(\mfm,\f)C(\mfm,\g)}{C(\mcO_F,\f)C(\mcO_F,\g)}> 0.$$
 	It is sufficient to assume that $C(\mcO_F,\f)C(\mcO_F,\g)>0$. We then have to show that there exist infinitely many $\mfm$
 	such that $C(\mfm,\f)C(\mfm,\g)>0$. If not, then $C(\mfm,\f)C(\mfm,\g)\leq 0$ for all ideals $\mfm \subseteq \mcO_F$ with $N(\mfm) \gg 0$.
 	Note that, $C(\mfm,\f)C(\mfm,\g)$ cannot be equal to zero for almost all ideals $\mfm\subseteq\mcO_F$. 
 	For in this case $\sum_{\mfm\subseteq\mcO_F}\frac{C(\mfm,\f)C(\mfm,\g)}{N(\mfm)^s}$ is a Dirichlet polynomial and 
 	$$\Lambda_{\f,\g}(s)= \prod_{j=1}^n \Gamma(s+1+ \frac{k_j-l_j-k_0-l_0}{2})\Gamma(s-\frac{k^{\prime}_j+l^{\prime}_j}{2})L_{\f,\g}(s)$$ 
 	%$$\Lambda_{\f_1,\g_1}(s)= \prod_{j=1}^n \Gamma(s+1+ \frac{k_j-l_j-k_0-l_0}{2})\Gamma(s-\frac{k^{\prime}_j+l^{\prime}_j}{2})L_{\f_1,\g_1}(s)$$ 
 	% $$(2\pi)^{-2sn-\kappa}N(\mfc D_F^2)^s\prod_{j=1}^n\Gamma(s+\frac{|k_j-l_j|}{2})\prod_{j=1}^{n}\Gamma(s-1+\frac{k_j+l_j}{2})\zeta_F^{\mfc}(2s)\sum_{\mfm\subseteq\mcO_F}\frac{C(\mfm,\f)C(\mfm,\g)}{N(\mfm)^s}$$ 
 	is entire. 
 	The presence of the multiple $\Gamma$-factors ensures that $\sum_{\mfm\subseteq\mcO_F}\frac{C(\mfm,\f)C(\mfm,\g)}{N(\mfm)^s}$ has infinitely many zeros.
 	Hence, by Lemma \ref{lem2},  we get that $C(\mcO_F,\f)C(\mcO_F,\g)=0$, which is a contradiction. Hence, there exists an integral ideal $\mfd \subseteq \mcO_F$ such that $C(\mfd,f)C(\mfd,g)<0$.  
 	
 	%Recall that, for any integral ideal $\mathfrak{q} \subseteq \mcO_F$, we have an operator $U(\mathfrak{q})$ acting on the space of Hilbert modular forms.
 	%For any $\f \in M_{k}(\Gamma(\mathfrak{c}),\psi)$, the Fourier coefficients of $\f|U(\mathfrak{q})$ and of $\f$ are related by 
 	%$$ C(\mfm,\f|U(\mfq)) = C(\mfq\mfm,\f). $$
 	
 	%Apply the previous arguments with Hilbert cusp forms 
 	Now, by Proposition ~\ref{Uoperator}, $\f|U(\mfd)$ and $\g|U(\mfd)$ are Hilbert cusp forms of weights $k_1, k_2$, respectively and weight $\mfd\mfc$.
 	Observe that $$C(\mcO_F,\f|U(\mfd))C(\mcO_F,\g|U(\mfd))=C(\mfd,\f)C(\mfd,\g)<0.$$
 	Now, by~\eqref{negative}, we have $C(\mfm,\f|U(\mfd))C(\mfm,\g|U(\mfd))>0$ for infinitely many $\mfm \subseteq\mcO_F$. This proves our claim.

 	% \begin{rmk}
 	% Look at the expression $C(\mfm\mfp, f - T_{\mfp}(f|\mfp)) = C(\mfm\mfp, f)- C(\mfm\mfp, T_{\mfp}(f|\mfp))=0$. 
 	% This is because, in the second quantity  $$C(\mfm\mfp, T_{\mfp}(f|\mfp))=C(\mfm\mfp^2, f|\mfp) = C(\mfm\mfp, f).$$
 	% In the above expression, the first equality holds because $f|{\mfp}$ has level divisible by $\mfp$.
 	% \end{rmk}
 \end{proof}

\section{Proof of Theorem~\ref{maintheorem2}}
In this section, we shall prove Theorem~\ref{maintheorem2}.
%\begin{thm}\label{prime:power}
%	Let $\f$ and $\g$ be two non-zero primitive Hilbert cuspidal eigenforms over $F$ of level $\mfc$ and with different integral weights $k=(k_1,\ldots,k_n)$, $l=(l_1,\ldots,l_n)$,
%	respectively. For each integral ideal $\mfm \subseteq \mcO_F$, let $C(\mfm,\f)$ and $C(\mfm,\g)$ denote the Fourier coefficients (as defined in ~\eqref{eqn:coeff}) 
%	of $\f$ and $\g$, respectively. Then, for any prime ideal $\mfp\subseteq \mcO_F$, the set 
%	$$\{m\in\N | C(\mfp^m,\f) C(\mfp^m,\g)\neq 0 \}$$
%	has positive density. 
%\end{thm}
 By~\cite[(2.23)]{Shi78}, the Fourier coefficients $C(\mfm,\f)$ of $\f$ satisfy the following Hecke relations
 \begin{equation*}
    C(\mfm,\f)C(\mfn,\f)=\sum_{\mfm+\mfn\subset \mfa}N(\mfa)^{k_0-1}C(\mfa^{-2}\mfm\mfn),
 \end{equation*} 
 where $k_0 = \mathrm{max}\{k_1, \ldots, k_n\}$. In particular, for any $m \geq 1$, the following relation holds:
 \begin{equation}
    \label{multiplication}
 C(\mfp^{m+1},\f)=C(\mfp,\f)C(\mfp^m,\f)-N(\mfp)^{k_0-1}C(\mfp^{m-1},\f).
 \end{equation}
 For any integral ideal $\mfa \subseteq \mcO_F$, define
 $$\beta(\mfa,f):=\frac{C(\mfa,\f)}{N(\mfa)^{\frac{k_0-1}{2}}}.$$ 
 %  \begin{equation}
%  	\lambda(\mfm,\f)\lambda(\mfn,\f)=\sum_{\mfm+\mfn\subset \mfa}\lambda(\mfa^{-2}\mfm\mfn).
%  \end{equation}
%  
% With respect to above normalization, we have the following 
For any prime ideal  $\mfp\subseteq\mcO_F$, by~\eqref{multiplication}, we have the following 
\begin{equation}
  \label{relation} 
  \beta(\mfp^{m+1},\f)=\beta(\mfp,\f)\beta(\mfp^m,\f)-\beta(\mfp^{m-1},\f).
\end{equation}

It is well-known that for a primitive Hilbert cuspidal eigenform $\f$ over $F$, there is an irreducible cuspidal automorphic representation 
$\Pi=\Pi_{\f}$ of $GL_2(\A_F)$ corresponding to it.
For any place $\mfp$ of $F$ such that $\Pi_{\mfp}$ is unramified, let $\lambda_\mfp(\f)$ denote the eigenvalue of the Hecke operator
\begin{equation}\label{Heckeoperator}
 \GL_2(\mcO_\mfp)\left(\begin{matrix} \varpi_\mfp & \\ & 1 \end{matrix}\right) \GL_2(\mcO_\mfp)
\end{equation}
on $\Pi_{\mfp}^{\GL_2(\mcO_{\mfp})}$, where $\varpi_\mfp$ is a uniformizer of $\mcO_\mfp$. For such a prime $\mfp$,
by~\cite[Lemma 3.2]{KKT18}, we see that the eigenvalues $\lambda_\mfp(\f)$  and the Fourier coefficient $C(\mfp,\f)$ are related by 
 $$\lambda_\mfp(\f)=\frac{C(\mfp,\f)}{N(\mfp)^{\frac{k_0-2}{2}}}.$$
 For any fixed prime ideal $\mfp \nmid \mfc_1\mfc_2\Dif_F$, by~\cite[Theorem 3.3]{KKT18}, we have
 \begin{equation}\label{compare}
   \beta(\mfp,\f) :=\frac{\lambda_\mfp(\f)}{N(\mfp)^\frac{1}{2}}=\frac{C(\mfp,\f)}{N(\mfp)^{\frac{k_0-1}{2}}}\in[-2,2].
 \end{equation} 
% For a Hilbert primitive form $\f$ of level $\mfc$ and weight $k=(k_1,\ldots,k_n)$, we write 
 %\begin{equation}\label{normalization}
  %	\lambda(\mfm,f)=\frac{C(\mfm,\f)}{N(\mfm)^{r+1/2}}.
  %\end{equation}
 % For $\mfm,\mfn\subseteq \mcO_F$, by \cite[(2.12)]{Shi78}, we have 
 % \begin{equation}
  %	T(\mfm)T(\mfn)=\sum_{\mfm+\mfn\subset \mfa}N(\mfa)S(\mfa)T(\mfa^{-2}\mfm\mfn).
  %\end{equation}
  %Now, we have following Hecke relations
 %\begin{prop}\cite[Proposition 3.2.4]{Bir17}
 %	Let $\f\in S_{k,r}(\mfc)$, then 
 %	\begin{equation}\label{relation}
 %	\end{equation}
% \end{prop}
%By~\eqref{normalization} and~\eqref{relation}, we have 
%\begin{equation}
%	\lambda(\mfm,\f)\lambda(\mfn,\f)=\sum_{\mfm+\mfn\subset \mfa}\lambda(\mfa^{-2}\mfm\mfn).
%\end{equation}
%By~\cite[Proposition 2.3]{KKT18}, we know that, for a prime ideal $\mfp\subseteq\mcO_F$ 
%\begin{equation}\label{bound}
%  c(\mfp,\f)N(\mfp)^{1/2}/2\in[-1,1].
%\end{equation}
Since $\beta(\mfp,\f) \in [-2,2]$, we can write $\beta(\mfp,\f) = 2 \cos \alpha_{\mfp}$,
for some $0 \leq \alpha_{\mfp} \leq \pi$. Before getting into the proof of Theorem~\ref{maintheorem2}, we need the following proposition.

\begin{prop}
\label{for:fg}
For any fixed prime ideal $\mfp \nmid \mfc_1\mfc_2\Dif_F$ and for any $m \geq 1$, we have
   \begin{equation}
     \beta(\mfp^\mfm,\f)=
   	 \begin{cases}
   	    (-1)^m(m+1) & {\rm if} \ \alpha_\mfp=\pi; \\
   	    m+1 & {\rm if} \ \alpha_\mfp=0; \\
   	    \frac{\sin(m+1)\alpha_\mfp}{\sin\alpha_\mfp} & {\rm if} \ 0<\alpha_\mfp<\pi.
   	 \end{cases}
   \end{equation} 
\end{prop}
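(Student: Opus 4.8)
The plan is to prove the formula by induction on $m$, using the three-term recurrence \eqref{relation}, which in the notation $\beta(\mfp,\f) = 2\cos\alpha_\mfp$ reads $\beta(\mfp^{m+1},\f) = 2\cos\alpha_\mfp \cdot \beta(\mfp^m,\f) - \beta(\mfp^{m-1},\f)$. First I would record the base cases: $\beta(\mcO_F,\f) = \beta(\mfp^0,\f) = 1$ (since $C(\mcO_F,\f)$ normalizes to $1$ for a primitive eigenform) and $\beta(\mfp^1,\f) = 2\cos\alpha_\mfp$, and check that all three branches of the claimed formula return these values at $m=0$ and $m=1$. For the generic case $0 < \alpha_\mfp < \pi$, the claim is that $\beta(\mfp^m,\f) = \sin((m+1)\alpha_\mfp)/\sin\alpha_\mfp = U_m(\cos\alpha_\mfp)$, the Chebyshev polynomial of the second kind; the induction step is then exactly the classical trigonometric identity $\sin((m+2)\alpha) + \sin(m\alpha) = 2\cos\alpha\sin((m+1)\alpha)$, which rearranges to the recurrence.

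For the two boundary cases $\alpha_\mfp = 0$ and $\alpha_\mfp = \pi$, the formula $\sin((m+1)\alpha)/\sin\alpha$ is a $0/0$ indeterminate, so I would handle these separately (though they are just the limiting values $m+1$ and $(-1)^m(m+1)$ of $U_m$ at $\cos\alpha = \pm 1$). When $\alpha_\mfp = 0$, i.e. $\beta(\mfp,\f) = 2$, the recurrence becomes $\beta(\mfp^{m+1},\f) = 2\beta(\mfp^m,\f) - \beta(\mfp^{m-1},\f)$, whose solution with initial data $1, 2$ is the arithmetic progression $m+1$; this is a one-line induction. When $\alpha_\mfp = \pi$, i.e. $\beta(\mfp,\f) = -2$, the recurrence is $\beta(\mfp^{m+1},\f) = -2\beta(\mfp^m,\f) - \beta(\mfp^{m-1},\f)$, and substituting $\beta(\mfp^m,\f) = (-1)^m c_m$ reduces it to the previous case, giving $c_m = m+1$ and hence $(-1)^m(m+1)$.

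I do not expect a genuine obstacle here: the whole statement is the standard fact that the normalized Hecke eigenvalues at powers of a good prime are values of Chebyshev polynomials of the second kind evaluated at the (normalized) eigenvalue, and the content is entirely the recurrence \eqref{relation} together with elementary trigonometry. The only point requiring a little care is making sure the inductive step in the generic case is genuinely valid for \emph{all} $m \geq 1$ — in particular that one never divides by $\sin\alpha_\mfp = 0$ — which is why the endpoints must be peeled off and treated by the separate linear recurrences above. I would present the argument as: state the base cases, then give the three inductions (generic via the sine addition formula, and the two endpoints via the degenerate linear recurrences), concluding by \eqref{relation} in each case.
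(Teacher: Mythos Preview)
Your proposal is correct and follows essentially the same approach as the paper: both prove the generic case $0<\alpha_\mfp<\pi$ by induction on $m$ using the recurrence \eqref{relation} together with the identity $2\cos\alpha\sin((m+1)\alpha)-\sin(m\alpha)=\sin((m+2)\alpha)$, and both dismiss the endpoint cases $\alpha_\mfp\in\{0,\pi\}$ as straightforward inductions. Your write-up is in fact a bit more explicit (recording the $m=0$ base case, the Chebyshev interpretation, and the substitution reducing $\alpha_\mfp=\pi$ to $\alpha_\mfp=0$), but the underlying argument is the same.
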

\begin{proof}
The first two cases are easy to prove by induction. 
So WLOG assume that $0<\alpha_\mfp<\pi$. 
% we will prove that $ \lambda(\mfp^\mfm,\f)=\frac{\sin(m+1)\alpha_\mfp}{\sin \alpha_\mfp}$. 
When $m=1$, we have $\beta(\mfp,\f)=\frac{\sin 2\alpha_\mfp}{\sin \alpha_\mfp}=\frac{2\sin \alpha_\mfp\cos \alpha_\mfp}{\sin \alpha_\mfp}=2 \cos \alpha_\mfp$.
Assume that $\beta(\mfp^\mfm,\f)=\frac{\sin(m+1)\alpha_\mfp}{\sin \alpha_\mfp}$ for some $m \geq 1$. By~\eqref{relation}, we have 
\begin{align*}
	\beta(\mfp^{m+1},\f) &= \beta(\mfp,\f) \beta(\mfp^m,\f)-\beta(\mfp^{m-1},\f)\\ 
	                       &=2 \cos\alpha_{\mfp}\frac{\sin(m+1)\alpha_\mfp}{\sin \alpha_\mfp}-\frac{\sin m\alpha_\mfp}{\sin \alpha_\mfp}\\ 
	                       &=\frac{2\sin(m+1)\alpha_\mfp\cos\alpha_\mfp-\sin m\alpha_\mfp}{\sin\alpha_\mfp}\\ 
	                       &=\frac{\sin(m+2)\alpha_\mfp+\sin m\alpha_\mfp-\sin m\alpha_\mfp}{\sin\alpha_\mfp}\\
	                       &=\frac{\sin(m+2)\alpha_\mfp}{\sin\alpha_\mfp}.
\end{align*}    
\end{proof}

% \subsection{Proof of Theorem~\ref{maintheorem2}}
Now, we are in a position to prove Theorem~\ref{maintheorem2}. Let $\mfp \nmid \mfc_1\mfc_2\Dif_F$ be a prime ideal.
% \begin{equation}\label{for:g}
%    \lambda(\mfp^\mfm,\g)=
%    \begin{cases}
%    (-1)^m(m+1) & {\rm if} \ \beta_\mfp=\pi; \\
%    m+1 & {\rm if} \ \beta_\mfp=0; \\
%    \frac{\sin(m+1)\beta_\mfp}{\sin\beta_\mfp} & {\rm if} \ 0<\beta_\mfp<\pi.
%    \end{cases}
%    \end{equation}
   By~\eqref{compare}, one can write 
       $$\beta(\mfp,\f)=2 \ {\rm cos} \ \alpha_\mfp \ \rm{and} \ \beta(\mfp,\g)=2 \ {\rm cos} \ \beta_\mfp$$
   with $0\leq \alpha_\mfp,\beta_\mfp\leq \pi $.
   Now, the proof of Theorem~\ref{maintheorem2} follows from following cases. 
   \par

 \textbf{Case(1):} When $\alpha_\mfp=0$ or $\pi$ and $\beta_\mfp=0$ or $\pi$, then by Proposition~\ref{for:fg}, we see that 
     $$\{m\in\N | C(\mfp^m,\f)C(\mfp^m,\g)\neq 0 \}=\N.$$
 In this case all elements of the sequence $\{ C(\mfp^m,\f)C(\mfp^m,\g) \}_{m\in \N}$ are non-zero.
 
 \textbf{Case (2):} Suppose that at least one of $\alpha_\mfp$, $\beta_\mfp$ is $0$ or $\pi$, say $\alpha_\mfp=0$ or $\pi$ and $\beta_\mfp\in(0,\pi)$. If $\beta_\mfp/\pi\notin \Q$, there is nothing to prove. If $\beta_\mfp/\pi\in\Q$, say $\beta_\mfp=\frac{r}{s}$, where $r,s\in \N$ and $(r,s)= 1$, then we have ${\rm sin} \ m\alpha_\mfp=0$ if and only if $m$ is an integer multiple of $s$, then we have 
  $$\#\{m\leq x | C(\mfp^m,\f)C(\mfp^m,\g)\neq 0 \}=\#\{m\leq x |C(\mfp^m,\g)\neq 0 \}=[x]-\left[\frac{x}{s}\right].$$
  Hence the set $\{m\in \N | C(\mfp^m,\f)C(\mfp^m,\g)\neq 0\}$ has positive density. 

 \textbf{Case (3):} Suppose that $\alpha_\mfp=\beta_\mfp\in(0,\pi)$, i.e., $\alpha_\mfp/\pi=\beta_\mfp/\pi\in(0,1).$ 
  If $\alpha_\mfp/\pi\notin\Q$, then $C(\mfp^m,\f)C(\mfp^m,\g)\neq 0$ for all $m\in \N$ as ${\rm sin} \ m\alpha_\mfp\neq 0$ for all $m\in\N$.
  If $\alpha_\mfp/\pi\notin\Q$, say $\alpha_\mfp=\frac{r}{s}$, where $r,s\in \N$ and $(r,s)= 1$, then we have ${\rm sin} \ m\alpha_\mfp=0$ if and only if $m$ is an integer multiple of $s$ and hence 
 $$\#\{m\leq x | C(\mfp^m,\f)C(\mfp^m,\g)\neq 0 \}=[x]-\left[\frac{x}{s}\right].$$
 Hence the set $\{m\in \N | C(\mfp^m,\f)C(\mfp^m,\g)\neq 0\}$ has positive density.
 
 \textbf{Case (4):} Suppose that $\alpha_\mfp,\beta_\mfp\in(0,\pi)$ with $\alpha_\mfp\neq\beta_\mfp$. If both $\alpha_\mfp/\pi,\beta_\mfp/\pi\notin\Q$, then there is nothing to prove.
  Next suppose that one of them, say $\alpha_\mfp/\pi=\frac{r}{s}$ with $(r,s)=1$ and $\beta_\mfp\notin\Q$. Then we have 
 $$\#\{m\leq x | C(\mfp^m,\f)C(\mfp^m,\g)\neq 0 \}=\#\{m\leq x |C(\mfp^m,\f)\neq 0 \}=[x]-\left[\frac{x}{s}\right].$$
 Hence the set $\{m\in \N | C(\mfp^m,\f)C(\mfp^m,\g)\neq 0\}$  has positive density. 
 
 Now let both $\alpha_\mfp/\pi,\beta_\mfp/\pi\in\Q$. If $\alpha_\mfp/\pi=\frac{r_1}{s_1}$ and $\beta_\mfp/\pi=\frac{r_2}{s_2}$ with $(r_i,s_i)=1$, for $1\leq i\leq 2$, then 
 $$\#\{m\leq x | C(\mfp^m,\f)C(\mfp^m,\g)\neq 0 \}=\#[\{m\leq x |C(\mfp^m,\f)\neq 0 \}\cap \{m\leq x |C(\mfp^m,\g)\neq 0 \}].$$
 Since 
 \begin{align*}
 \#\{m\leq x | C(\mfp^m,\f)C(\mfp^m,\g)= 0 \}&=\#[\{m\leq x |C(\mfp^m,\f)= 0 \}\cup \{m\leq x |C(\mfp^m,\g)= 0 \}]\\
  &\leq \left[\frac{x}{s_1}\right]+\left[\frac{x}{s_2}\right].
 \end{align*}
 Hence the set $\{m\in \N | C(\mfp^m,\f)C(\mfp^m,\g)\neq 0\}$  has positive density. 
 This completes the proof of Theorem~\ref{maintheorem2}.

\end{document}